\numberwithin{equation}{section}
\newtheorem{theorem}{Theorem}
\newtheorem{corollary}[theorem]{Corollary}
\newtheorem{lemma}{Lemma}
\theoremstyle{definition}
\newtheorem{definition}{Definition}
\newcommand{\R}{\mathbb{R}}
\newcommand{\p}{\partial}
\newcommand{\eps}{\varepsilon}
\begin{document}

\begin{center}
\Large{\textbf{
Strong instability of standing waves \\
for nonlinear Schr\"{o}dinger equations \\
with harmonic potential}}
\end{center}

\vspace{2mm}

\begin{center}
Dedicated to Professor Yoshio Tsutsumi on his 60th birthday
\end{center}

\vspace{3mm}

\begin{center}
{\large Masahito Ohta} 
\end{center}
\begin{center}
Department of Mathematics, 
Tokyo University of Science, \\
1-3 Kagurazaka, Shinjuku-ku, Tokyo 162-8601, Japan
\end{center}

\begin{abstract}
We study strong instability of standing waves $e^{i\omega t} \phi_{\omega}(x)$ 
for nonlinear Schr\"{o}dinger equations
with $L^2$-supercritical nonlinearity and a harmonic potential, 
where $\phi_{\omega}$ is a ground state of the corresponding stationary problem. 
We prove that $e^{i\omega t} \phi_{\omega}(x)$ is strongly unstable 
if $\p_{\lambda}^2 E(\phi_{\omega}^{\lambda}) |_{\lambda=1}\le 0$, 
where $E$ is the energy and 
$v^{\lambda}(x)=\lambda^{N/2} v(\lambda x)$ 
is the $L^2$-invariant scaling. 
\end{abstract}

\section{Introduction}\label{sect:intro}

This paper is concerned with the instability of standing waves $e^{i\omega t}\phi_{\omega}(x)$ 
for the nonlinear Schr\"{o}dinger equation with a harmonic potential 
\begin{align}\label{nls}
i\p_t u=-\Delta u+|x|^2u-|u|^{p-1}u,
\quad (t,x)\in \R\times \R^N, 
\end{align}
where $N\ge 1$ and $1<p<2^*-1$. 
Here, $2^*$ is defined by $2^*=2N/(N-2)$ if $N\ge 3$, and $2^*=\infty$ if $N=1,2$. 

It is known that 
for any $\omega\in (-N,\infty)$, 
there exists a unique positive solution (ground state) 
$\phi_{\omega}(x)$ of the stationary problem 
\begin{align}\label{sp}
-\Delta \phi+|x|^2 \phi+\omega \phi-|\phi|^{p-1}\phi=0, 
\quad x\in \R^N
\end{align}
in the energy space 
$$X:=\{v\in H^1(\R^N):  |x| v \in L^2(\R^N)\}.$$ 
Note that the condition $\omega>-N$ appears naturally 
in the existence of positive solutions for \eqref{sp}, 
because the first eigenvalue of $-\Delta +|x|^2$ is $N$. 
For the uniqueness of positive solutions for \eqref{sp}, 
see \cite{HO1,HO2,KT,SW}. 

The Cauchy problem for \eqref{nls} is locally well-posed in the energy space $X$ 
(see \cite[\S 9.2]{caz} and \cite{OhYG}). 
That is, for any $u_0\in X$ there exist $T_{\max}=T_{\max}(u_0)\in (0,\infty]$ 
and a unique solution $u\in C([0,T_{\max}), X)$ 
of \eqref{nls} with intial condition $u(0)=u_0$ such that 
either $T_{\max}=\infty$ (global existence) 
or $T_{\max}<\infty$ and $\displaystyle{\lim_{t\to T_{\max}}\|u(t)\|_{X}=\infty}$ (finite time blowup). 
Moreover, the solution $u(t)$ satisfies the conservations of charge and energy 
\begin{align}\label{conservation}
\|u(t)\|_{L^2}^2=\|u_0\|_{L^2}^2, \quad E(u(t))=E(u_0)
\end{align}
for all $t\in [0,T_{\max})$, 
where the energy $E$ is defined by 
\begin{align} \label{def:E}
E(v)=\frac{1}{2}\|\nabla v\|_{L^2}^2
+\frac{1}{2}\|x v\|_{L^2}^2
-\frac{1}{p+1}\|v\|_{L^{p+1}}^{p+1}. 
\end{align}

Here we give the definitions of stability and instability of standing waves. 

\begin{definition}
We say that the standing wave solution 
$e^{i\omega t}\phi_{\omega}$ of \eqref{nls} is {\em stable} 
if for any $\eps>0$ there exists $\delta>0$ such that 
if $u_0\in X$ and $\|u_0-\phi_{\omega}\|_{X}<\delta$, 
then the solution $u(t)$ of \eqref{nls} with $u(0)=u_0$ exists globally and satisfies 
$$\sup_{t\ge 0}\inf_{\theta\in \R}\|u(t)-e^{i\theta}\phi_{\omega}\|_X<\eps.$$ 
Otherwise, $e^{i\omega t}\phi_{\omega}$ is said to be {\em unstable}. 
\end{definition}

\begin{definition}
We say that $e^{i\omega t}\phi_{\omega}$ is {\em strongly unstable}  
if for any $\eps>0$ there exists $u_0\in X$ such that 
$\|u_0-\phi_{\omega}\|_X<\eps$ and 
the solution $u(t)$ of \eqref{nls} with $u(0)=u_0$ blows up in finite time.
\end{definition}

Before we state our main result, 
we recall some known results 
on the stability and instablity of standing waves $e^{i\omega t}\phi_{\omega}$ for \eqref{nls}. 
When $\omega$ is sufficiently close to $-N$, 
the standing wave solution $e^{i\omega t} \phi_{\omega}$ of \eqref{nls} is stable 
for any $p\in (1,2^*-1)$ (see \cite{FO1}). 
On the other hand, when $\omega$ is sufficiently large, 
the standing wave solution $e^{i\omega t} \phi_{\omega}$ of \eqref{nls} is 
stable if $1<p\le 1+4/N$ (see \cite{FO1,fuk05}), 
and unstable if $1+4/N<p<2^*-1$ (see \cite{FO2}). 
More precisely, it is proved in \cite{FO2} that 
$e^{i\omega t} \phi_{\omega}$ is unstable 
if $\p_{\lambda}^2 E(\phi_{\omega}^{\lambda}) |_{\lambda=1}<0$, 
where $v^{\lambda}(x)=\lambda^{N/2} v(\lambda x)$ 
is the $L^2$-invariant scaling 
(see also \cite{oht95}). 

However, the strong instability of $e^{i\omega t}\phi_{\omega}$ has been unknown for \eqref{nls}, 
although there are some results on blowup  
(see, e.g., \cite{carles02, zhang05,wang}). 

Now we state our main result in this paper. 

\begin{theorem}\label{thm1}
Let $N\ge 1$, $1+4/N<p<2^*-1$, $\omega>-N$, 
and let $\phi_{\omega}$ be the positive solution of \eqref{sp}. 
Assume that $\p_{\lambda}^2 E(\phi_{\omega}^{\lambda}) |_{\lambda=1}\le 0$. 
Then, the standing wave solution $e^{i\omega t}\phi_{\omega}$ of \eqref{nls} 
is strongly unstable. 
\end{theorem}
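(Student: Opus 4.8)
The plan is to follow the now-standard variational route for strong instability going back to Berestycki--Cazenave, adapted to the harmonic-potential setting via the $L^2$-invariant scaling $v^\lambda(x)=\lambda^{N/2}v(\lambda x)$. First I would set up the variational characterization of $\phi_\omega$: introduce the action $S_\omega(v)=E(v)+\tfrac{\omega}{2}\|v\|_{L^2}^2$ and the scaling functional $Q(v):=\p_\lambda S_\omega(v^\lambda)|_{\lambda=1}$, which by an explicit computation equals $\|\nabla v\|_{L^2}^2-\|xv\|_{L^2}^2-\tfrac{N(p-1)}{2(p+1)}\|v\|_{L^{p+1}}^{p+1}$ (the $\omega$-term drops because the scaling preserves $L^2$). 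Since $\phi_\omega$ solves \eqref{sp}, it satisfies the Pohozaev-type identity $Q(\phi_\omega)=0$, and the key point is that $\phi_\omega$ minimizes $S_\omega$ on the constraint $\{v\in X\setminus\{0\}: Q(v)=0\}$; I would prove this minimality using the ground-state property together with the fact that along $\lambda\mapsto v^\lambda$ the function $\lambda\mapsto S_\omega(v^\lambda)$ has a unique critical point which is a maximum when $\p_\lambda^2 S_\omega(v^\lambda)$ behaves correctly (here one uses $p>1+4/N$, i.e. $L^2$-supercriticality, so the $L^{p+1}$-term dominates the kinetic term under scaling).

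Next I would use the hypothesis $\p_\lambda^2 E(\phi_\omega^\lambda)|_{\lambda=1}\le 0$. Note $\p_\lambda^2 E(\phi_\omega^\lambda)|_{\lambda=1}=\p_\lambda^2 S_\omega(\phi_\omega^\lambda)|_{\lambda=1}=\p_\lambda Q(\phi_\omega^\lambda)|_{\lambda=1}$ again because $\|\phi_\omega^\lambda\|_{L^2}$ is constant. The construction of the blowup initial data is $u_{0,\lambda}=\phi_\omega^\lambda$ with $\lambda>1$ close to $1$; since $v\mapsto v^\lambda$ is continuous in $X$ and fixes the $L^2$-norm, $\|u_{0,\lambda}-\phi_\omega\|_X\to 0$ as $\lambda\to 1$. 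I would then show two things for such data: (i) $S_\omega(u_{0,\lambda})<S_\omega(\phi_\omega)$, and (ii) $Q(u_{0,\lambda})<0$. For (i), Taylor-expand $\lambda\mapsto S_\omega(\phi_\omega^\lambda)$ at $\lambda=1$: the first derivative vanishes (that is $Q(\phi_\omega)=0$) and the second derivative is $\le 0$ by hypothesis, and if it is $<0$ we are done immediately, while the degenerate case $=0$ requires looking at the third-order term or, more cleanly, using the sign structure of $Q(\phi_\omega^\lambda)$ directly. For (ii), observe $Q(\phi_\omega^\lambda)=\lambda\,\p_\lambda S_\omega(\phi_\omega^\mu)|_{\mu=\lambda}$ up to the scaling bookkeeping, and since $\lambda\mapsto S_\omega(\phi_\omega^\lambda)$ is nonincreasing at $\lambda=1$ (derivative zero, second derivative $\le 0$) it is strictly decreasing for $\lambda>1$, forcing $Q(\phi_\omega^\lambda)<0$ there; the $L^2$-supercritical condition guarantees this monotonicity persists and does not reverse.

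With initial data satisfying (i) and (ii), I would run the standard invariant-set plus virial argument. Define the set $\mathcal{B}=\{v\in X: S_\omega(v)<S_\omega(\phi_\omega),\ Q(v)<0\}$; using the minimization property of $\phi_\omega$ on $\{Q=0\}$ and a continuity/connectedness argument in $\lambda$, one shows $\mathcal{B}$ is invariant under the flow of \eqref{nls} (a solution starting in $\mathcal{B}$ cannot cross $\{Q=0\}$ without violating the energy bound, and $\{Q>0\}$ is separated from $\mathcal B$ by the manifold $\{Q=0\}$). Moreover on $\mathcal{B}$ one gets a uniform upper bound $Q(u(t))\le -\delta<0$ — this is the step where $p>1+4/N$ and the minimization are both essential, giving $Q(v)\le 2\big(S_\omega(v)-S_\omega(\phi_\omega)\big)$ or a similar coercive inequality. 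Finally, the virial identity: for solutions in $X$ one has $\tfrac{d^2}{dt^2}\|xu(t)\|_{L^2}^2 = 8Q(u(t)) + (\text{lower order from the potential})$. Here lies the main obstacle: the harmonic potential contributes extra terms to the virial identity (unlike the free NLS), so the second derivative of the variance is \emph{not} simply $8Q(u(t))$; one must control the additional $\|xu\|_{L^2}^2$-type contributions, and a direct convexity argument may fail. The standard fix, which I would adopt, is to replace the plain variance by a modified functional such as $J(t)=\int |x|^2|u(t)|^2\,dx$ combined with a correction using $\mathrm{Im}\int \bar u\, x\cdot\nabla u\,dx$, or to use the pseudo-conformal-type change of variables that transforms \eqref{nls} into a free NLS on a finite time interval, reducing the problem to the known free-NLS blowup criterion; then finite-time blowup for the transformed equation, on the appropriate time interval, translates back. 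Establishing that the harmonic-potential virial quantity stays negative and bounded away from zero long enough to force $\|xu(t)\|_{L^2}^2$ to hit zero (hence, by the uncertainty principle, $\|u(t)\|_X\to\infty$) in finite time is the crux of the argument.
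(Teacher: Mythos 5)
Your proposal follows the Berestycki--Cazenave template, but its central step --- that $\phi_\omega$ minimizes $S_\omega$ on the manifold $\{v\neq 0:\ Q(v)=0\}$, justified by the claim that $\lambda\mapsto S_\omega(v^\lambda)$ ``has a unique critical point which is a maximum'' --- is precisely what fails in the harmonic-potential setting, and the paper is structured around avoiding it. By \eqref{Ed0} one has
\begin{equation*}
S_\omega(v^{\lambda})=\frac{\lambda^{2}}{2}\|\nabla v\|_{L^2}^2
+\frac{\lambda^{-2}}{2}\|xv\|_{L^2}^2
-\frac{\lambda^{\alpha}}{p+1}\|v\|_{L^{p+1}}^{p+1}+\frac{\omega}{2}\|v\|_{L^2}^2,
\end{equation*}
which tends to $+\infty$ as $\lambda\to 0^+$ because of the $\lambda^{-2}$ term contributed by the potential. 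Hence the scaling curve is not a single-bump profile: it can decrease, pass through a local minimum, rise to a local maximum, and decrease again. A given $v$ with $Q(v)=0$ may therefore sit at a local \emph{minimum} of its own scaling curve, so the standard argument for minimality of $S_\omega(\phi_\omega)$ over $\{Q=0\}$ breaks down; with it go the coercive inequality $Q(v)\le 2\bigl(S_\omega(v)-S_\omega(\phi_\omega)\bigr)$ and the invariance of your set $\mathcal{B}=\{S_\omega<S_\omega(\phi_\omega),\ Q<0\}$. This is not a removable technicality: the entire content of the paper is the substitute inequality $E(\phi_\omega)\le E(v)-P(v)$ (Lemma \ref{lem-key}, with $P=\tfrac12 Q$ in your normalization), which is proved only under the \emph{additional} constraints $\|v\|_{L^2}^2=\|\phi_\omega\|_{L^2}^2$ and $\|v\|_{L^{p+1}}^{p+1}>\|\phi_\omega\|_{L^{p+1}}^{p+1}$, using the variational characterization of $\phi_\omega$ on the $L^{p+1}$-sphere (Lemma \ref{lem-VarCha}), a case split on $\|xv\|_{L^2}$ versus $\|x\phi_\omega\|_{L^2}$, and a Taylor-expansion inequality exploiting $R(\phi_\omega)\le 0$. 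None of these ingredients, nor a workable replacement, appears in your outline; the invariant set must carry the extra constraints (the paper's $\mathcal{A}_\omega$, $\mathcal{B}_\omega$), and their invariance must itself be reproved via Lemma \ref{lem-key}.

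Your concern about the virial identity points in the wrong direction. For the harmonic potential the identity is exact, $\frac{d^2}{dt^2}\|xu(t)\|_{L^2}^2=16P(u(t))$ with $P$ as in \eqref{def:P}: the potential's contribution $-\tfrac12\|xv\|_{L^2}^2$ is already contained in $P=\tfrac12\,\p_\lambda E(v^\lambda)|_{\lambda=1}$, so no modified variance, momentum correction, or lens transform is needed. The genuine difficulty is not the virial identity but the uniform bound $P(u(t))\le E(u_0)-E(\phi_\omega)<0$; once that is in hand, blowup follows because a nonnegative $C^2$ function with second derivative bounded above by a negative constant cannot exist for all $t\ge 0$ (one does not need $\|xu(t)\|_{L^2}\to 0$ or the uncertainty principle).
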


We remark that 
by the scaling $v^{\lambda}(x)=\lambda^{N/2}v(\lambda x)$
for $\lambda>0$, 
we have $\|v^{\lambda}\|_{L^2}^2=\|v\|_{L^2}^2$ and 
\begin{align}\label{Ed0} 
E(v^{\lambda})
=\frac{\lambda^2}{2}\|\nabla v\|_{L^2}^2
+\frac{\lambda^{-2}}{2}\|x v\|_{L^2}^2
-\frac{\lambda^{\alpha}}{p+1} \|v\|_{L^{p+1}}^{p+1}.
\end{align}
Here and hereafter, we put 
$$\alpha:=\dfrac{N}{2}(p-1)>2.$$

Moreover, we define 
$S_{\omega}(v)=E(v)+\dfrac{\omega}{2}\|v\|_{L^2}^2$ 
for $v\in X$. 
Then, $\phi_{\omega}$ satisfies $S_{\omega}'(\phi_{\omega})=0$, and 
\begin{align*}
0=\p_{\lambda} S_{\omega}(\phi_{\omega}^{\lambda}) \big|_{\lambda=1}
&=\|\nabla \phi_{\omega}\|_{L^2}^2-\|x \phi_{\omega} \|_{L^2}^2
-\frac{\alpha}{p+1} \|\phi_{\omega}\|_{L^{p+1}}^{p+1}, \\
\p_{\lambda}^2 E(\phi_{\omega}^{\lambda}) \big|_{\lambda=1}
&=\|\nabla \phi_{\omega}\|_{L^2}^2+3\|x \phi_{\omega} \|_{L^2}^2
-\frac{\alpha (\alpha-1)}{p+1} \|\phi_{\omega}\|_{L^{p+1}}^{p+1} \\
&=4 \|x \phi_{\omega} \|_{L^2}^2
-\frac{\alpha (\alpha-2)}{p+1} \|\phi_{\omega}\|_{L^{p+1}}^{p+1}.
\end{align*}
Thus, the condition $\p_{\lambda}^2 E(\phi_{\omega}^{\lambda}) |_{\lambda=1}\le 0$ 
is equivalent to 
$$\frac{ \|x \phi_{\omega} \|_{L^2}^2}{\|\phi_{\omega}\|_{L^{p+1}}^{p+1}}
\le \frac{\alpha (\alpha-2)}{4(p+1)}.$$
Furthermore, it is proved in Section 2 of \cite{FO2} that 
\begin{equation*}
\lim_{\omega \to \infty} 
\frac{ \|x \phi_{\omega} \|_{L^2}^2}{\|\phi_{\omega}\|_{L^{p+1}}^{p+1}}=0.
\end{equation*}

Therefore, as a corollary of Theorem \ref{thm1}, 
we have the following. 

\begin{corollary}\label{cor2}
Let $N\ge 1$, $1+4/N<p<2^*-1$, $\omega>-N$, 
and let $\phi_{\omega}$ be the positive solution of \eqref{sp}. 
Then, there exists $\omega_0 \in (-N,\infty)$ 
depending only on $N$ and $p$ such that 
the standing wave solution $e^{i\omega t}\phi_{\omega}$ of \eqref{nls} 
is strongly unstable for all $\omega\in (\omega_0,\infty)$. 
\end{corollary}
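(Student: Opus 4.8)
The plan is to derive Corollary \ref{cor2} directly from Theorem \ref{thm1} together with the two facts recorded in the remark preceding it: the algebraic equivalence of the hypothesis $\p_{\lambda}^2 E(\phi_{\omega}^{\lambda})|_{\lambda=1}\le 0$ with a ratio inequality, and the asymptotic vanishing of that ratio as $\omega\to\infty$. In short, I would verify that the hypothesis of Theorem \ref{thm1} is automatically satisfied once $\omega$ is large enough, and then simply invoke Theorem \ref{thm1}.

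First I would introduce the constant
$$c(N,p):=\frac{\alpha(\alpha-2)}{4(p+1)}, \qquad \alpha=\frac{N}{2}(p-1),$$
which depends only on $N$ and $p$. Since the assumption $p>1+4/N$ gives $\alpha>2$, we have $\alpha(\alpha-2)>0$, and hence $c(N,p)>0$. This strict positivity is the only place where the restriction $p>1+4/N$ enters, and it is precisely what makes the argument work: the bound appearing in the equivalence is a \emph{fixed positive} number.

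Next I would recall, from the remark, that
$$\p_{\lambda}^2 E(\phi_{\omega}^{\lambda})\big|_{\lambda=1}\le 0
\quad\Longleftrightarrow\quad
\frac{\|x\phi_{\omega}\|_{L^2}^2}{\|\phi_{\omega}\|_{L^{p+1}}^{p+1}}\le c(N,p),$$
and combine this with the limit established in Section 2 of \cite{FO2},
$$\lim_{\omega\to\infty}\frac{\|x\phi_{\omega}\|_{L^2}^2}{\|\phi_{\omega}\|_{L^{p+1}}^{p+1}}=0.$$
Applying the definition of this limit with the positive tolerance $\eps=c(N,p)$ yields some $\omega_0$, which we may take in $(-N,\infty)$, such that the ratio is $<c(N,p)$ for every $\omega>\omega_0$. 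Moreover $\omega_0$ depends only on $N$ and $p$, since the map $\omega\mapsto \|x\phi_{\omega}\|_{L^2}^2/\|\phi_{\omega}\|_{L^{p+1}}^{p+1}$ is determined once the equation \eqref{sp}, i.e.\ the pair $(N,p)$, is fixed. For such $\omega$ the equivalence gives $\p_{\lambda}^2 E(\phi_{\omega}^{\lambda})|_{\lambda=1}\le 0$, so Theorem \ref{thm1} applies and $e^{i\omega t}\phi_{\omega}$ is strongly unstable.

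I do not expect any substantial obstacle here: the corollary is a packaging of Theorem \ref{thm1} with the previously established identities and the asymptotic estimate of \cite{FO2}, so all of the analytic difficulty resides in those inputs rather than in the deduction. The only points requiring care are confirming that $c(N,p)>0$—so that a fixed positive tolerance is available in the limit—and checking that $\omega_0$ can be chosen to depend on nothing but $N$ and $p$.
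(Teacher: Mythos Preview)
Your proposal is correct and follows exactly the same route as the paper: use the equivalence of $\p_{\lambda}^2 E(\phi_{\omega}^{\lambda})|_{\lambda=1}\le 0$ with the ratio bound $\|x\phi_{\omega}\|_{L^2}^2/\|\phi_{\omega}\|_{L^{p+1}}^{p+1}\le \alpha(\alpha-2)/(4(p+1))$, invoke the limit from \cite{FO2} to verify this for all large $\omega$, and apply Theorem \ref{thm1}. The paper presents this argument in the remarks immediately before the corollary rather than as a separate proof, but the content is identical.
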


We give the proof of Theorem \ref{thm1} in Sections 2 and 3. 
In Section 2, we introduce a subset $\mathcal{B}_{\omega}$ of $X$, 
and Theorem \ref{thm1} is reduced to Theorem \ref{thm3}. 
In Section 3, we give the proof of Theorem \ref{thm3}. 
The key to the proof of Theorem \ref{thm3} is Lemma \ref{lem-key}. 
The proof of Lemma \ref{lem-key} relies heavily on the specialty of harmonic potential, 
and it is not applicable to other types of nonlinear Schr\"odinger equations. 
For example, consider the nonlinear Schr\"odinger equation 
with a delta potential in one space dimension 
\begin{equation} \label{nls-delta}
i\p_t u=-\p_x^2 u-\gamma \delta (x) u-|u|^{p-1}u, \quad 
(t,x)\in \R\times \R,
\end{equation}
where $\gamma>0$, 
$\delta (x)$ is the delta measure at the origin, 
$5<p<\infty$, 
$\omega>\gamma^2/4$, 
and $\phi_{\omega}$ is a unique positive solution 
of the corresponding stationary problem. 
It is proved in \cite{OY2} that 
if $E_{\gamma} (\phi_{\omega})>0$, 
then $e^{i\omega t} \phi_{\omega}$ is strongly unstable for \eqref{nls-delta}
(see also \cite{nak, OY1} for related results),  
where the energy for \eqref{nls-delta} is defined by 
\begin{align*}
E_{\gamma} (v)= \frac{1}{2} \|\p_x v\|_{L^2}^2
-\frac{\gamma}{2} |v(0)|^2 
-\frac{1}{p+1}\|v\|_{L^{p+1}}^{p+1}, \quad 
v\in H^1(\R). 
\end{align*}
Here, we remark that $E_{\gamma} (\phi_{\omega})>0$ 
implies $\p_{\lambda}^2 E_{\gamma} (\phi_{\omega}^{\lambda}) |_{\lambda=1}<0$ 
for this case. 
The problem for \eqref{nls-delta} 
is completely different from that for \eqref{nls}, 
and the proof of Lemma \ref{lem-key} in this paper 
is not applicable to \eqref{nls-delta}. 
So, it is still an open problem 
whether the standing wave solution 
$e^{i\omega t} \phi_{\omega}$ of \eqref{nls-delta} is strongly unstable or not 
for the case where 
$E_{\gamma}(\phi_{\omega})\le 0$ and 
$\p_{\lambda}^2 E_{\gamma}(\phi_{\omega}^{\lambda}) |_{\lambda=1}<0$
(see \cite[\S 4]{OY2} for more remarks). 

\section{Proof of Theorem \ref{thm1}}

Throughout this section, we assume that 
$1+4/N<p<2^*-1$, $\omega>-N$, 
and $\phi_{\omega}$ is the positive solution of \eqref{sp}. 
We put $\alpha=N(p-1)/2>2$. 

The proofs of blowup 
and strong instability of standing waves for nonlinear Schr\"odinger equations 
rely on the virial identity 
(see, e.g., \cite{BC, caz, lec, zhang02}). 
Let $u(t)$ be the solution of \eqref{nls} with $u(0)=u_0\in X$. 
Then, the function $t\mapsto \|xu(t)\|_{L^2}^2$ is in $C^2[0,T_{\max})$, and satisfies 
\begin{align}
\frac{d^2}{dt^2}\|xu(t)\|_{L^2}^2=16 P(u(t))
\label{virial}
\end{align}
for all $t\in [0,T_{\max})$, where 
\begin{align}\label{def:P}
P(v)= \frac{1}{2} \|\nabla v\|_{L^2}^2
- \frac{1}{2} \|x v\|_{L^2}^2
-\frac{\alpha}{2(p+1)}\|v\|_{L^{p+1}}^{p+1}. 
\end{align}

Moreover, we define 
\begin{align}
R(v)=\|\nabla v\|_{L^2}^2
+3\|x v\|_{L^2}^2
-\frac{\alpha (\alpha-1)}{p+1}\|v\|_{L^{p+1}}^{p+1}.
\label{def:R}
\end{align}
Note that by \eqref{Ed0}, we have 
\begin{align}\label{PR}
P(v^{\lambda})= \frac{1}{2} \lambda \p_{\lambda} E(v^{\lambda}), \quad 
R(v^{\lambda})=\lambda^2 \p_{\lambda}^2 E(v^{\lambda}) 
\end{align}
for $\lambda>0$. 
We also define 
\begin{align*}
&\mathcal{A}_{\omega}
=\left\{v\in X: 
E(v)<E(\phi_{\omega}), ~ 
\|v\|_{L^2}^2=\|\phi_{\omega}\|_{L^2}^2, ~
\|v\|_{L^{p+1}}^{p+1}>\|\phi_{\omega}\|_{L^{p+1}}^{p+1} \right\}, \\
&\mathcal{B}_{\omega}
=\left\{ v\in \mathcal{A}_{\omega}: P(v)<0 \right\}. 
\end{align*} 

\begin{lemma} \label{lem-bom}
Assume that $R(\phi_{\omega})\le 0$. 
Then, $\phi_{\omega}^{\lambda}\in \mathcal{B}_{\omega}$ 
for all $\lambda>1$. 
\end{lemma}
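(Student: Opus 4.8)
I want to show that if $R(\phi_\omega) \le 0$, then $\phi_\omega^\lambda \in \mathcal{B}_\omega$ for every $\lambda > 1$. Since $\mathcal{B}_\omega = \{v \in \mathcal{A}_\omega : P(v) < 0\}$, I must verify four conditions for $v = \phi_\omega^\lambda$: the $L^2$-norm matches $\phi_\omega$; the $L^{p+1}$-norm strictly exceeds that of $\phi_\omega$; the energy is strictly below $E(\phi_\omega)$; and $P(\phi_\omega^\lambda) < 0$. The first is immediate from $\|\phi_\omega^\lambda\|_{L^2} = \|\phi_\omega\|_{L^2}$, which is recorded just before Lemma~\ref{lem-bom}. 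The second is also immediate: from \eqref{Ed0} one reads off $\|\phi_\omega^\lambda\|_{L^{p+1}}^{p+1} = \lambda^\alpha \|\phi_\omega\|_{L^{p+1}}^{p+1}$, and since $\alpha > 0$ and $\lambda > 1$ this is strictly larger. So the crux is the energy inequality and the sign of $P$.

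The natural device is the single-variable function $g(\lambda) := E(\phi_\omega^\lambda)$, whose explicit form is \eqref{Ed0}. From \eqref{PR} we have $P(\phi_\omega^\lambda) = \tfrac12 \lambda g'(\lambda)$ and $R(\phi_\omega^\lambda) = \lambda^2 g''(\lambda)$; note also $P(\phi_\omega) = \tfrac12 g'(1)$ and $R(\phi_\omega) = g''(1)$. Because $\phi_\omega$ solves \eqref{sp}, the Pohozaev-type identity displayed in the excerpt gives $\partial_\lambda S_\omega(\phi_\omega^\lambda)|_{\lambda=1} = 0$; since the $L^2$-term is $\lambda$-independent, this says $g'(1) = 0$, i.e. $P(\phi_\omega) = 0$. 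Now I would study $g''$. Differentiating \eqref{Ed0} twice, $g''(\lambda) = \|\nabla\phi_\omega\|_{L^2}^2 + 3\lambda^{-4}\|x\phi_\omega\|_{L^2}^2 - \tfrac{\alpha(\alpha-1)}{p+1}\lambda^{\alpha-2}\|\phi_\omega\|_{L^{p+1}}^{p+1}$. The key structural fact is that $\lambda^4 g''(\lambda)$ is a sum of a term growing like $\lambda^4$, a constant, and a term $-c\,\lambda^{\alpha+2}$ with $c > 0$ and $\alpha + 2 > 4$; so $\lambda^4 g''(\lambda)$ is strictly decreasing in $\lambda$ on a suitable range — more precisely, $g''$ is eventually negative and, once it turns negative, stays negative. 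The hypothesis $R(\phi_\omega) \le 0$ means $g''(1) \le 0$; combined with the monotonicity of $\lambda \mapsto \lambda^{\alpha+2} - (\text{lower-order})$, I can conclude $g''(\lambda) < 0$ for all $\lambda > 1$. (One must check the elementary claim that if a function of the form $a\lambda^4 + b - c\lambda^{\alpha+2}$ with $a,b,c>0$, $\alpha+2>4$, is $\le 0$ at $\lambda=1$, then it is $<0$ for $\lambda>1$; this follows because dividing by $\lambda^4$ gives $a + b\lambda^{-4} - c\lambda^{\alpha-2}$, with both $b\lambda^{-4}$ decreasing and $-c\lambda^{\alpha-2}$ decreasing, so the whole expression is strictly decreasing.)

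With $g''(\lambda) < 0$ for $\lambda > 1$ in hand, the rest is calculus. Since $g'(1) = 0$ and $g'' < 0$ on $(1,\infty)$, we get $g'(\lambda) < 0$ for all $\lambda > 1$, hence $P(\phi_\omega^\lambda) = \tfrac12\lambda g'(\lambda) < 0$ — this is the condition $P(v) < 0$. Integrating once more, $g'(\lambda) < 0$ on $(1,\infty)$ gives $g(\lambda) < g(1)$ for $\lambda > 1$, that is $E(\phi_\omega^\lambda) < E(\phi_\omega)$ — the energy condition. Together with the two trivial norm conditions already noted, this places $\phi_\omega^\lambda$ in $\mathcal{A}_\omega$, and then the strict sign of $P$ places it in $\mathcal{B}_\omega$, as desired.

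**Main obstacle.** There is no deep difficulty here; the only point requiring a little care is establishing that $g''$ stays negative on all of $(1,\infty)$ from the single inequality $g''(1) \le 0$. The mild subtlety is that $g''$ involves \emph{two} competing decaying-vs-growing effects (the $\lambda^{-4}$ potential term decreases, the $\lambda^{\alpha-2}$ nonlinear term grows, and the gradient term is constant), so one cannot just invoke convexity of $g$; instead one uses that $\lambda^{-4} g''(\lambda)$ is a genuinely monotone function of $\lambda$. Everything else is bookkeeping with \eqref{Ed0} and \eqref{PR} together with the Pohozaev identity $P(\phi_\omega) = 0$.
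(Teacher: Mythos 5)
Your proof is correct and takes essentially the same approach as the paper: the paper checks the two norm conditions directly and then deduces $P(\phi_{\omega}^{\lambda})<0$ and $E(\phi_{\omega}^{\lambda})<E(\phi_{\omega})$ for $\lambda>1$ from the shape of $\lambda\mapsto E(\phi_{\omega}^{\lambda})$ using $P(\phi_{\omega})=0$ and $R(\phi_{\omega})\le 0$ (summarized there as ``drawing the graphs''), which is exactly your calculus argument with the strict decrease of $\lambda\mapsto\p_{\lambda}^{2}E(\phi_{\omega}^{\lambda})$ spelled out. One cosmetic slip in your closing paragraph: the monotone quantity is $\p_{\lambda}^{2}E(\phi_{\omega}^{\lambda})$ itself, i.e.\ $\lambda^{-4}$ times the quartic-scaled expression, not ``$\lambda^{-4}\p_{\lambda}^{2}E(\phi_{\omega}^{\lambda})$''.
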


\begin{proof}
First, we have 
$\|\phi_{\omega}^{\lambda}\|_{L^2}^2=\|\phi_{\omega}\|_{L^2}^2$ and 
$\|\phi_{\omega}^{\lambda}\|_{L^{p+1}}^{p+1}
=\lambda^{\alpha} \|\phi_{\omega}\|_{L^{p+1}}^{p+1}
>\|\phi_{\omega}\|_{L^{p+1}}^{p+1}$ 
for $\lambda>1$. 

Next, by drawing the graphs of the functions  
$\lambda\mapsto P(\phi_{\omega}^{\lambda})$ and 
$\lambda\mapsto E(\phi_{\omega}^{\lambda})$ 
using \eqref{Ed0}, \eqref{PR}, $P(\phi_{\omega})=0$ and $R(\phi_{\omega})\le 0$, 
we see that 
$P(\phi_{\omega}^{\lambda})<P(\phi_{\omega})=0$ and 
$E(\phi_{\omega}^{\lambda})<E(\phi_{\omega})$ for $\lambda>1$. 
This completes the proof. 
\end{proof}

Since $\phi_{\omega}^{\lambda}\to \phi_{\omega}$ in $X$ as $\lambda\to 1$, 
Theorem \ref{thm1} follows from Lemma \ref{lem-bom} and the following Theorem \ref{thm3}. 

\begin{theorem}\label{thm3}
Let $N\ge 1$, $1+4/N<p<2^*-1$, $\omega>-N$, 
and assume that $R(\phi_{\omega})\le 0$. 
If $u_0\in \mathcal{B}_{\omega}$, 
then the solution $u(t)$ of \eqref{nls} with $u(0)=u_0$ blows up in finite time. 
\end{theorem}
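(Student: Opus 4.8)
The plan is to show that $\mathcal{B}_\omega$ is invariant under the flow of \eqref{nls} and that on $\mathcal{B}_\omega$ the virial quantity $P(u(t))$ stays bounded away from $0$ by a negative constant; the classical concavity argument applied to \eqref{virial} then forces finite-time blowup. First I would establish a quantitative gap: for $v\in\mathcal{A}_\omega$ I claim there is $\delta>0$, depending only on $\omega$ (through $\phi_\omega$), such that $P(v)<0$ already implies $P(v)\le -\delta$. To see this, consider the rescaling $\lambda\mapsto \phi:=v^\lambda$ is the wrong object — instead, for $v\in\mathcal{A}_\omega$ look at the function $g(\lambda)=E(v^\lambda)$, which by \eqref{Ed0} is a combination of $\lambda^2$, $\lambda^{-2}$, $\lambda^\alpha$ terms, and use $P(v^\lambda)=\tfrac12\lambda g'(\lambda)$ from \eqref{PR}. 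Because $\|v\|_{L^{p+1}}^{p+1}>\|\phi_\omega\|_{L^{p+1}}^{p+1}$ and $\|v\|_{L^2}$ is fixed, one compares $g$ with the corresponding function for $\phi_\omega$ and shows that whenever $P(v)=\tfrac12 g'(1)<0$ the value $E(v)$ must already be bounded above by $E(\phi_\omega)-\delta$ for a uniform $\delta$, or alternatively that $P(v)$ itself is bounded away from zero; this is where Lemma \ref{lem-key} (not yet stated in the excerpt, but promised) and the variational characterization of $\phi_\omega$ as a ground state enter.

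Next I would prove invariance of $\mathcal{B}_\omega$. Let $u_0\in\mathcal{B}_\omega$ and let $u(t)$ be the maximal solution. The conditions $\|u(t)\|_{L^2}^2=\|\phi_\omega\|_{L^2}^2$ and $E(u(t))=E(u_0)<E(\phi_\omega)$ hold for all $t$ by the conservation laws \eqref{conservation}, so the first two defining properties of $\mathcal{A}_\omega$ persist. For the condition $\|u(t)\|_{L^{p+1}}^{p+1}>\|\phi_\omega\|_{L^{p+1}}^{p+1}$ and for $P(u(t))<0$, I argue by continuity: if either failed, then at the first time $t_0$ where $P(u(t_0))=0$ one would have $u(t_0)$ satisfying $P(u(t_0))=0$, $\|u(t_0)\|_{L^2}=\|\phi_\omega\|_{L^2}$, and $E(u(t_0))<E(\phi_\omega)$, which contradicts the minimality of $E(\phi_\omega)$ among functions on the constraint $\{P=0,\ \|\cdot\|_{L^2}=\|\phi_\omega\|_{L^2}\}$ — precisely the ground-state variational identity. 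A parallel argument handles the $L^{p+1}$ inequality, using that it cannot touch equality before $P$ does. Hence $u(t)\in\mathcal{B}_\omega$ for all $t\in[0,T_{\max})$, and combined with the first step $P(u(t))\le-\delta<0$ throughout.

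Finally, from \eqref{virial} we get $\frac{d^2}{dt^2}\|xu(t)\|_{L^2}^2=16P(u(t))\le -16\delta<0$ for all $t\in[0,T_{\max})$. If $T_{\max}=\infty$, integrating twice makes $\|xu(t)\|_{L^2}^2$ eventually negative, a contradiction; hence $T_{\max}<\infty$, i.e. $u(t)$ blows up in finite time. The main obstacle is the first step — converting the open condition $P(v)<0$ on $\mathcal{B}_\omega$ into a uniform negative bound $P(v)\le-\delta$, and more precisely showing that the constraint set $\{v:\|v\|_{L^2}=\|\phi_\omega\|_{L^2},\ P(v)=0,\ v\ne 0\}$ has $E$-infimum equal to $E(\phi_\omega)$ attained only at (rotations of) $\phi_\omega$. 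This rests on the delicate Lemma \ref{lem-key} and on the structure of the harmonic potential, which is exactly why, as the introduction notes, the method does not transfer to \eqref{nls-delta}.
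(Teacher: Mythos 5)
Your overall architecture --- invariance of $\mathcal{B}_{\omega}$, a time-uniform negative upper bound on $P(u(t))$, then the concavity argument via \eqref{virial} --- is the same as the paper's, and your closing step is fine. The genuine gap is in how you produce the negative bound and in what you use to close the continuity argument. The ``quantitative gap'' you claim, namely a $\delta>0$ depending only on $\omega$ such that $v\in\mathcal{A}_{\omega}$ and $P(v)<0$ force $P(v)\le-\delta$, is false: by Lemma \ref{lem-bom} the rescalings $\phi_{\omega}^{\lambda}$ with $\lambda\downarrow 1$ lie in $\mathcal{B}_{\omega}$, yet $P(\phi_{\omega}^{\lambda})\to P(\phi_{\omega})=0$. (The same examples refute your alternative claim that $E(v)\le E(\phi_{\omega})-\delta$ uniformly.) What is actually true, and is the content of Lemma \ref{lem-key}, is the \emph{relative} inequality $E(\phi_{\omega})\le E(v)-P(v)$, i.e. $P(v)\le E(v)-E(\phi_{\omega})$, valid whenever $P(v)\le 0$, $\|v\|_{L^2}^2=\|\phi_{\omega}\|_{L^2}^2$ and $\|v\|_{L^{p+1}}^{p+1}>\|\phi_{\omega}\|_{L^{p+1}}^{p+1}$; combined with conservation of energy this gives $P(u(t))\le E(u_0)-E(\phi_{\omega})<0$, a bound depending on $u_0$ and not only on $\omega$.

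Second, at the first time $t_0$ with $P(u(t_0))=0$ you invoke ``the minimality of $E(\phi_{\omega})$ on $\{P=0,\ \|\cdot\|_{L^2}=\|\phi_{\omega}\|_{L^2}\}$'' as if it were a standard ground-state identity. It is not: the variational characterization actually available (Lemma \ref{lem-VarCha}, from \cite{FO2}) is minimality of $S_{\omega}$ on the $L^{p+1}$-sphere, and because $P$ contains the term $-\tfrac12\|xv\|_{L^2}^2$ with the ``wrong'' sign, the Berestycki--Cazenave minimization over $\{P=0\}$ is exactly what breaks down for the harmonic potential --- this is why strong instability was open here. The paper bridges this by rescaling $v$ to $v^{\lambda_0}$ onto the $L^{p+1}$-sphere of $\phi_{\omega}$, comparing $f(\lambda)=E(v^{\lambda})-\lambda^2P(v)$ at $\lambda_0$ and at $1$, and exploiting the hypothesis $R(\phi_{\omega})\le 0$ through a Taylor-expansion inequality; none of this is supplied or replaced in your proposal, so the central step of the proof remains unproven.
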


We give the proof of Theorem \ref{thm3} in the next section. 

\section{Proof of Theorem \ref{thm3}}

Throughout this section, we assume that 
$1+4/N<p<2^*-1$, $\omega>-N$, 
and $\phi_{\omega}$ is the positive solution of \eqref{sp}. 

\begin{lemma} \label{lem-VarCha}
If $v\in X$ satisfies 
$\|v\|_{L^2}^2=\|\phi_{\omega}\|_{L^2}^2$ and 
$\|v\|_{L^{p+1}}^{p+1}=\|\phi_{\omega}\|_{L^{p+1}}^{p+1}$, 
then $E(\phi_{\omega})\le E(v)$. 
\end{lemma}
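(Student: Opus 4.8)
The statement to prove is Lemma~\ref{lem-VarCha}: among all $v \in X$ with the same $L^2$-norm and the same $L^{p+1}$-norm as $\phi_\omega$, the ground state minimizes the energy $E$. This is a characterization of $\phi_\omega$ as a constrained minimizer, and the natural route is to set up a minimization problem on the constraint set
\[
\mathcal{M} := \{ v \in X : \|v\|_{L^2}^2 = \|\phi_\omega\|_{L^2}^2, \ \|v\|_{L^{p+1}}^{p+1} = \|\phi_\omega\|_{L^{p+1}}^{p+1} \},
\]
show the infimum $m := \inf_{v \in \mathcal{M}} E(v)$ is attained, analyze the Euler--Lagrange equation of a minimizer, and identify the minimizer with $\phi_\omega$ (up to phase/symmetry), so that $m = E(\phi_\omega)$.

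First I would establish compactness. The key structural point is that $X = \{v \in H^1 : |x|v \in L^2\}$ embeds \emph{compactly} into $L^q(\R^N)$ for $2 \le q < 2^*$ (the harmonic oscillator has compact resolvent); this is standard and avoids all the concentration-compactness machinery one would need without the confining potential. So take a minimizing sequence $(v_n) \subset \mathcal{M}$ with $E(v_n) \to m$. Since $\|v_n\|_{L^{p+1}}$ is fixed and $E(v_n)$ is bounded, the quadratic part $\tfrac12\|\nabla v_n\|_{L^2}^2 + \tfrac12\|xv_n\|_{L^2}^2$ is bounded, hence $(v_n)$ is bounded in $X$. Passing to a subsequence, $v_n \rightharpoonup v$ weakly in $X$ and strongly in $L^2$ and $L^{p+1}$ by compactness; therefore $v \in \mathcal{M}$ (the two constraints pass to the limit by strong convergence, so in particular $v \ne 0$). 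By weak lower semicontinuity of the $H^1$- and weighted-$L^2$-seminorms, $E(v) \le \liminf E(v_n) = m$, so $v$ is a minimizer and $E(v) = m$.

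Next I would extract the Euler--Lagrange equation. By Lagrange multipliers there exist $\mu, \nu \in \R$ with
\[
-\Delta v + |x|^2 v = \mu v + \nu |v|^{p-1} v.
\]
The standard trick is to test this against $v$ and against the generator of $L^2$-scaling $x \cdot \nabla v + \tfrac{N}{2}v$ (equivalently, use the Nehari/Pohozaev-type identities $\p_\lambda E(v^\lambda)|_{\lambda=1}$ and $\p_\lambda$ of the $\mu$-rescaled family); combined with the fact that $\phi_\omega$ satisfies the same two identities with its multipliers $(-\omega, 1)$ and has the \emph{same} values of $\|v\|_{L^2}^2$ and $\|v\|_{L^{p+1}}^{p+1}$, one forces $\nu > 0$ (using $\alpha > 2$, i.e. $L^2$-supercriticality) and then a rescaling $v \mapsto \nu^{1/(p-1)} v$, composed with an $L^2$-dilation, turns $v$ into a solution of \eqref{sp}. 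By uniqueness of the positive ground state and the fact that a minimizer of $E$ under these constraints can be taken real and nonnegative (replace $v$ by $|v|$; this lowers or preserves $\|\nabla v\|_{L^2}$ and leaves the constraints and the other terms of $E$ untouched), the rescaled $v$ equals $\phi_\omega$. Tracking the rescalings back shows that in fact $v$ itself must coincide with $\phi_\omega$ (the constraints pin down the two rescaling parameters to be trivial), whence $m = E(\phi_\omega)$ and $E(\phi_\omega) \le E(v)$ for every $v$ satisfying the hypotheses.

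The main obstacle I expect is the bookkeeping in the last step: matching the Lagrange multipliers to the ground-state equation while keeping careful track of the two one-parameter rescalings (the $L^2$-invariant dilation $v^\lambda$ and the amplitude rescaling absorbing $\nu$), and verifying that the two constraint values exactly exhaust those two degrees of freedom so that uniqueness of $\phi_\omega$ applies and returns $v = \phi_\omega$ on the nose rather than merely some rescaled copy. One has to be slightly careful that the positivity/reality reduction (passing to $|v|$) is compatible with the constraints — it is, since $\||v|\|_{L^q} = \|v\|_{L^q}$ — and that strict inequality $\|\nabla |v|\|_{L^2} < \|\nabla v\|_{L^2}$ cannot occur for a minimizer, which follows because otherwise $|v|$ would beat $v$. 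Everything else (compact embedding, weak lower semicontinuity, boundedness of the minimizing sequence) is routine given the harmonic potential.
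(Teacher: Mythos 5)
Your existence argument (compact embedding of $X$ into $L^{p+1}$, boundedness of the minimizing sequence because the $L^{p+1}$-norm is frozen by the constraint, weak lower semicontinuity, reduction to a nonnegative minimizer) is sound. The genuine gap is in the identification step. First, the sign of the multiplier $\nu$ is asserted but not derived: testing the Euler--Lagrange equation against the generator of the $L^2$-dilation gives $\nu\,\tfrac{\alpha}{p+1}\|v\|_{L^{p+1}}^{p+1}=\|\nabla v\|_{L^2}^2-\|xv\|_{L^2}^2$, and nothing controls the sign of the right-hand side for an arbitrary constrained minimizer (the case $\nu=0$, where $v$ would be the Gaussian ground state of the harmonic oscillator, also needs to be excluded). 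Second, and more seriously, even granting $\nu>0$, the amplitude rescaling $w=\nu^{1/(p-1)}v$ produces a positive solution of \eqref{sp} with frequency $\omega'=-\mu$, hence $w=\phi_{\omega'}$ by uniqueness --- but there is no $L^2$-dilation available to relate $\phi_{\omega'}$ to $\phi_{\omega}$: the scaling $v^{\lambda}(x)=\lambda^{N/2}v(\lambda x)$ does \emph{not} map solutions of \eqref{sp} to solutions of \eqref{sp}, since the Laplacian and the harmonic potential scale as $\lambda^{2}$ and $\lambda^{-2}$ respectively. So the minimizer is only identified as $\nu^{-1/(p-1)}\phi_{\omega'}$ for some unknown $\omega'$, and eliminating $\nu$ from the two constraints leaves the single scalar equation $\|\phi_{\omega'}\|_{L^{p+1}}^{p+1}/\|\phi_{\omega'}\|_{L^2}^{p+1}=\|\phi_{\omega}\|_{L^{p+1}}^{p+1}/\|\phi_{\omega}\|_{L^2}^{p+1}$. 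Concluding $\omega'=\omega$ requires injectivity of this norm ratio along the ground-state branch, a global property that does not follow from the fixed-$\omega$ uniqueness theorem and is not available. Without it you cannot conclude that the infimum equals $E(\phi_{\omega})$.

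The paper sidesteps all of this with a one-line argument: it invokes the known single-constraint characterization $S_{\omega}(\phi_{\omega})=\inf\{S_{\omega}(v):v\in X,\ \|v\|_{L^{p+1}}^{p+1}=\|\phi_{\omega}\|_{L^{p+1}}^{p+1}\}$ from Lemma 3.1 of \cite{FO2}, where $S_{\omega}=E+\tfrac{\omega}{2}\|\cdot\|_{L^2}^2$, and then observes that on your doubly-constrained set $E$ and $S_{\omega}$ differ by the constant $\tfrac{\omega}{2}\|\phi_{\omega}\|_{L^2}^2$, so the minimality of $S_{\omega}$ at $\phi_{\omega}$ transfers immediately to $E$. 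That single-constraint problem involves minimizing a positive-definite quadratic form (for $\omega>-N$) and has only one multiplier, which can be normalized away directly; this is why the extra $L^2$ constraint should be absorbed into the functional rather than treated as a second Lagrange condition.
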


\begin{proof}
It is well known that 
$$S_{\omega}(\phi_{\omega})
=\inf \left\{S_{\omega}(v):
v\in X, ~ \|v\|_{L^{p+1}}^{p+1}=\|\phi_{\omega}\|_{L^{p+1}}^{p+1} \right\}$$
(see, e.g., Lemma 3.1 of \cite{FO2}), where 
$S_{\omega}(v)=E(v)+\dfrac{\omega}{2}\|v\|_{L^2}^2$. 

Thus, 
if $v\in X$ satisfies 
$\|v\|_{L^2}^2=\|\phi_{\omega}\|_{L^2}^2$ and 
$\|v\|_{L^{p+1}}^{p+1}=\|\phi_{\omega}\|_{L^{p+1}}^{p+1}$, 
then 
\begin{equation*}
E(\phi_{\omega})
=S_{\omega} (\phi_{\omega})-\frac{\omega}{2} \|\phi_{\omega}\|_{L^{2}}^{2}
\le S_{\omega} (v)-\frac{\omega}{2} \|v\|_{L^{2}}^{2} =E(v). 
\end{equation*}
This completes the proof. 
\end{proof}

\begin{lemma}\label{lem-invA}
The set $\mathcal{A}_{\omega}$ is invariant under the flow of \eqref{nls}. 
That is, if $u_0 \in \mathcal{A}_{\omega}$, 
then the solution $u(t)$ of \eqref{nls} with $u(0)=u_0$ satisfies 
$u(t)\in \mathcal{A}_{\omega}$ for all $t\in [0,T_{\max})$. 
\end{lemma}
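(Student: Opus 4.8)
The plan is to notice that two of the three conditions defining $\mathcal{A}_{\omega}$ are preserved for free by the conservation laws \eqref{conservation}, so that the whole content of the lemma is the persistence of the strict inequality $\|u(t)\|_{L^{p+1}}^{p+1}>\|\phi_{\omega}\|_{L^{p+1}}^{p+1}$, which I would obtain by a continuity argument backed by the variational estimate of Lemma \ref{lem-VarCha}.

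First I would fix $u_0\in\mathcal{A}_{\omega}$ and let $u\in C([0,T_{\max}),X)$ be the corresponding solution of \eqref{nls}. By \eqref{conservation}, $\|u(t)\|_{L^2}^2=\|u_0\|_{L^2}^2=\|\phi_{\omega}\|_{L^2}^2$ and $E(u(t))=E(u_0)<E(\phi_{\omega})$ for every $t\in[0,T_{\max})$, so the conditions $\|v\|_{L^2}^2=\|\phi_{\omega}\|_{L^2}^2$ and $E(v)<E(\phi_{\omega})$ hold along the entire trajectory.

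Next I would treat the remaining condition by contradiction. Since $1<p<2^*-1$, the embedding $H^1(\R^N)\hookrightarrow L^{p+1}(\R^N)$ is continuous, hence the map $g(t):=\|u(t)\|_{L^{p+1}}^{p+1}$ is continuous on $[0,T_{\max})$ with $g(0)>\|\phi_{\omega}\|_{L^{p+1}}^{p+1}$. If there were $t_0\in(0,T_{\max})$ with $g(t_0)\le\|\phi_{\omega}\|_{L^{p+1}}^{p+1}$, the intermediate value theorem would give $t_1\in(0,t_0]$ with $g(t_1)=\|\phi_{\omega}\|_{L^{p+1}}^{p+1}$; but then $u(t_1)$ satisfies $\|u(t_1)\|_{L^2}^2=\|\phi_{\omega}\|_{L^2}^2$ and $\|u(t_1)\|_{L^{p+1}}^{p+1}=\|\phi_{\omega}\|_{L^{p+1}}^{p+1}$, so Lemma \ref{lem-VarCha} forces $E(\phi_{\omega})\le E(u(t_1))=E(u_0)<E(\phi_{\omega})$, a contradiction. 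Therefore $g(t)>\|\phi_{\omega}\|_{L^{p+1}}^{p+1}$ for all $t\in[0,T_{\max})$, and combining with the previous paragraph gives $u(t)\in\mathcal{A}_{\omega}$ for all $t\in[0,T_{\max})$.

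I do not expect any genuine obstacle here: the proof is essentially bookkeeping once Lemma \ref{lem-VarCha} is in hand. The one point worth stating carefully is conceptual rather than technical — it is precisely the variational inequality $E(\phi_{\omega})\le E(v)$ on the ``constraint surface'' $\{\|v\|_{L^2}=\|\phi_{\omega}\|_{L^2},\ \|v\|_{L^{p+1}}=\|\phi_{\omega}\|_{L^{p+1}}\}$ that blocks the trajectory from ever reaching the threshold value, which is why the open set $\mathcal{A}_{\omega}$ is flow-invariant.
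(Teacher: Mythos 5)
Your proof is correct and is exactly the argument the paper compresses into its one-line proof (``this follows from the conservation laws and Lemma \ref{lem-VarCha}''): conservation handles the $L^2$ and energy constraints, and the intermediate value theorem combined with Lemma \ref{lem-VarCha} rules out the trajectory ever reaching the threshold $\|u(t)\|_{L^{p+1}}^{p+1}=\|\phi_{\omega}\|_{L^{p+1}}^{p+1}$. No gaps; your write-up simply makes the paper's implicit continuity argument explicit.
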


\begin{proof}
This follows from the conservation laws \eqref{conservation} and Lemma \ref{lem-VarCha}. 
\end{proof}

By \eqref{def:E}, \eqref{def:P} and \eqref{def:R}, 
we have 
\begin{align}
&E(v)-P(v)
=\|x v\|_{L^2}^2
+\frac{\alpha-2}{2(p+1)}\|v\|_{L^{p+1}}^{p+1}, 
\label{EP} \\
&R(v)-2P(v)
=4\|x v\|_{L^2}^2
-\frac{\alpha (\alpha-2)}{p+1}\|v\|_{L^{p+1}}^{p+1}.
\label{RP} 
\end{align}

The following lemma is the key to the proof of Theorem \ref{thm3}. 

\begin{lemma} \label{lem-key}
Assume that $R(\phi_{\omega})\le 0$. 
If $v\in X$ satisfies 
\begin{equation}\label{key1}
P(v)\le 0,~ 
\|v\|_{L^2}^2=\|\phi_{\omega} \|_{L^2}^2, ~
\|v\|_{L^{p+1}}^{p+1}>\|\phi_{\omega}\|_{L^{p+1}}^{p+1}, 
\end{equation}
then $E(\phi_{\omega})\le E(v)-P(v)$. 
\end{lemma}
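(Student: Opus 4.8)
The plan is to express everything through the two quantities $\|xv\|_{L^2}^2$ and $\|v\|_{L^{p+1}}^{p+1}$ that the harmonic potential makes available, and then run a one‑parameter rescaling argument. First I would use \eqref{EP} to rewrite the conclusion as
\[
\|x\phi_\omega\|_{L^2}^2+\frac{\alpha-2}{2(p+1)}\|\phi_\omega\|_{L^{p+1}}^{p+1}
\;\le\;
\|xv\|_{L^2}^2+\frac{\alpha-2}{2(p+1)}\|v\|_{L^{p+1}}^{p+1}
\]
(the left side is $E(\phi_\omega)=E(\phi_\omega)-P(\phi_\omega)$, since $P(\phi_\omega)=0$), and I would record, by combining \eqref{EP} and \eqref{RP}, the identity
\[
E(v)-P(v)=\frac14\bigl(R(v)-2P(v)\bigr)+\frac{\alpha^2-4}{4(p+1)}\|v\|_{L^{p+1}}^{p+1};
\]
in particular $E(\phi_\omega)\le\frac{\alpha^2-4}{4(p+1)}\|\phi_\omega\|_{L^{p+1}}^{p+1}$, because $P(\phi_\omega)=0$ and $R(\phi_\omega)\le0$.

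I would then split according to the sign of $R(v)-2P(v)$. If $R(v)-2P(v)\ge0$, the identity above together with $\|v\|_{L^{p+1}}^{p+1}>\|\phi_\omega\|_{L^{p+1}}^{p+1}$ and $\alpha>2$ immediately gives $E(v)-P(v)\ge\frac{\alpha^2-4}{4(p+1)}\|v\|_{L^{p+1}}^{p+1}>\frac{\alpha^2-4}{4(p+1)}\|\phi_\omega\|_{L^{p+1}}^{p+1}\ge E(\phi_\omega)$. So the whole difficulty is the case $R(v)-2P(v)<0$.

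In that case I would first reduce to $P(v)=0$. Replacing $v$ by $w:=|v|\,e^{i\theta}$ with a real $\theta$ keeps $\|xv\|_{L^2}$, $\|v\|_{L^2}$ and $\|v\|_{L^{p+1}}$ fixed while turning $\|\nabla v\|_{L^2}^2$ into $\|\nabla|v|\|_{L^2}^2+\int|\nabla\theta|^2|v|^2$; since $P(v)\le0$ and $\|\nabla|v|\|_{L^2}\le\|\nabla v\|_{L^2}$, one can choose $\theta$ so that $P(w)=0$, and then $E(w)=E(w)-P(w)=E(v)-P(v)$ by \eqref{EP}, while $R(w)-2P(w)=R(v)-2P(v)<0$, i.e.\ $R(w)<0$, and $\|w\|_{L^2}^2=\|\phi_\omega\|_{L^2}^2$, $\|w\|_{L^{p+1}}^{p+1}>\|\phi_\omega\|_{L^{p+1}}^{p+1}$. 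Now I would study $\lambda\mapsto E(w^\lambda)$: since $P(w)=0$ and $R(w)<0$, $\lambda=1$ is a strict local maximum; since $E(w^\lambda)\to+\infty$ as $\lambda\to0^+$ — this is exactly where the harmonic potential is essential, through the term $\lambda^{-2}\|xw\|_{L^2}^2$ — the map has, as in the proof of Lemma~\ref{lem-bom}, a local minimum at some $\lambda_1\in(0,1)$ with $P(w^{\lambda_1})=0$, $R(w^{\lambda_1})\ge0$, and $E(w^\lambda)$ nondecreasing on $[\lambda_1,1]$. Let $\lambda_0\in(0,1)$ be defined by $\|w^{\lambda_0}\|_{L^{p+1}}^{p+1}=\|\phi_\omega\|_{L^{p+1}}^{p+1}$ (automatically $\|w^{\lambda_0}\|_{L^2}^2=\|\phi_\omega\|_{L^2}^2$). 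If $\lambda_1\le\lambda_0<1$, then $E(w^{\lambda_0})\le E(w^1)=E(w)$ and Lemma~\ref{lem-VarCha} yields $E(\phi_\omega)\le E(w^{\lambda_0})\le E(w)=E(v)-P(v)$. If $\lambda_0<\lambda_1$, then $w^{\lambda_1}$ satisfies $R(w^{\lambda_1})-2P(w^{\lambda_1})\ge0$ and $\|w^{\lambda_1}\|_{L^{p+1}}^{p+1}>\|\phi_\omega\|_{L^{p+1}}^{p+1}$, so the easy case applied to $w^{\lambda_1}$ gives $E(w^{\lambda_1})=E(w^{\lambda_1})-P(w^{\lambda_1})>E(\phi_\omega)$, whence $E(v)-P(v)=E(w)=E(w^1)\ge E(w^{\lambda_1})>E(\phi_\omega)$.

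The step I expect to be the main obstacle is realizing that one cannot simply rescale the given $v$ to match $\|\phi_\omega\|_{L^{p+1}}$ and compare via Lemma~\ref{lem-VarCha}, because the gradient term $\frac{\lambda^2}{2}\|\nabla v\|_{L^2}^2$ in $E(v^\lambda)$ is uncontrolled when $P(v)<0$; one must first move the ``defect'' $-2P(v)\ge0$ into the phase to restore $P=0$, and only then does the rescaling interact cleanly with $P$ and $R$. The remaining work — checking that $\lambda=1$ is the larger of the two critical scales of $\lambda\mapsto E(w^\lambda)$ and locating $\lambda_0$ relative to the local‑minimum scale $\lambda_1$, with the second sub‑case feeding back into the easy case — is routine once the reduction is in place.
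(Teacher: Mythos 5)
Your argument is correct, but it takes a genuinely different route from the paper's. The paper splits on whether $\|xv\|_{L^2}^2\ge\|x\phi_{\omega}\|_{L^2}^2$ rather than on the sign of $R(v)-2P(v)$, and in the nontrivial case it neutralizes the kinetic term algebraically: it introduces $f(\lambda)=E(v^{\lambda})-\lambda^2P(v)$, in which $\|\nabla v\|_{L^2}^2$ cancels exactly, and reduces $f(\lambda_0)\le f(1)$ (with the same $\lambda_0$ you use) to the scalar inequality $\alpha(\alpha-2)(\lambda_0-\lambda_0^{-1})^2\lambda_0^{\alpha}\le 4(2\lambda_0^{\alpha}-\alpha\lambda_0^2+\alpha-2)$, proved by a Taylor expansion of $s^{\alpha/2}$ at $s=1$; the hypotheses $R(\phi_{\omega})\le0$ and $\|xv\|_{L^2}^2\le\|x\phi_{\omega}\|_{L^2}^2$ enter only to convert $\|xv\|_{L^2}^2$ into a multiple of $\lambda_0^{\alpha}\|v\|_{L^{p+1}}^{p+1}$. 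You instead restore $P=0$ by the phase modification $w=|v|e^{i\theta}$ — legitimate, since the diamagnetic inequality together with $P(v)\le0$ guarantees a suitable linear phase, and $E-P$, $R-2P$ and all the constrained norms depend only on $|v|$ — and then read the conclusion off the shape of the fibering map $\lambda\mapsto E(w^{\lambda})$. I checked the pieces you leave implicit: the identity $E-P=\tfrac14(R-2P)+\tfrac{\alpha^2-4}{4(p+1)}\|\cdot\|_{L^{p+1}}^{p+1}$ follows from \eqref{EP} and \eqref{RP}; and since $\lambda^3\p_{\lambda}E(w^{\lambda})$ is negative at $\lambda=0$, unimodal in $\lambda$ (because $\alpha>2$), and vanishes at $\lambda=1$ where $\p_{\lambda}^2E(w^{\lambda})<0$, the local minimum $\lambda_1\in(0,1)$ with $P(w^{\lambda_1})=0$, $R(w^{\lambda_1})\ge0$ and the monotonicity on $[\lambda_1,1]$ are exactly as you claim, so both subcases close. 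What each approach buys: the paper's proof is self-contained and purely computational once $f$ is written down, at the price of an unmotivated Taylor-expansion inequality; yours avoids that computation and makes the variational geometry visible, at the price of the extra phase-modification step and a two-subcase analysis. Both proofs use Lemma \ref{lem-VarCha} and the rescaling to $\lambda_0$ in the same way, and both hinge on the $\lambda^{-2}\|x\cdot\|_{L^2}^2$ term supplied by the harmonic potential.
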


\begin{proof}
Let $v\in X$ satisfy \eqref{key1}. 

If $\|x v\|_{L^2}^2 \ge \|x\phi_{\omega}\|_{L^{2}}^{2}$, 
then it follows from 
$P(\phi_{\omega})=0$ and \eqref{EP} that 
\begin{align*}
E(\phi_{\omega})
&=\|x \phi_{\omega}\|_{L^2}^2
+\frac{\alpha-2}{2(p+1)}\|\phi_{\omega}\|_{L^{p+1}}^{p+1} \\
&\le \|x v\|_{L^2}^2
+\frac{\alpha-2}{2(p+1)}\|v\|_{L^{p+1}}^{p+1}
=E(v)-P(v).
\end{align*}

So, in what follows, 
we assume that 
\begin{equation}\label{key1.5}
\|x v\|_{L^2}^2\le \|x\phi_{\omega}\|_{L^{2}}^{2}.
\end{equation}

Let 
\begin{equation}\label{key2}
\lambda_0
=\left(\frac{\|\phi_{\omega}\|_{L^{p+1}}^{p+1}}{\|v\|_{L^{p+1}}^{p+1}}\right)^{1/\alpha}.
\end{equation}
Then, $0<\lambda_0<1$, 
$\|v^{\lambda_0}\|_{L^{p+1}}^{p+1}=\|\phi_{\omega}\|_{L^{p+1}}^{p+1}$, 
$\|v^{\lambda_0}\|_{L^{2}}^{2}=\|\phi_{\omega}\|_{L^{2}}^{2}$, 
and it follows from Lemma \ref{lem-VarCha} that 
\begin{equation} \label{key2.5} 
E(\phi_{\omega}) \le E(v^{\lambda_0}). 
\end{equation}

Next, we define 
\begin{align*}
f(\lambda)
:=E(v^{\lambda})- \lambda^2 P(v)
=\frac{\lambda^2+\lambda^{-2}}{2} \|x v\|_{L^2}^2
+\frac{\alpha \lambda^2-2 \lambda^{\alpha}}{2(p+1)} 
\|v\|_{L^{p+1}}^{p+1}
\end{align*}
for $\lambda>0$.  
Then, $f(\lambda_0)\le f(1)$ 
if and only if 
\begin{equation}\label{key3}
(p+1)(\lambda_0^2+\lambda_0^{-2}-2) \|x v\|_{L^2}^2
\le (2 \lambda_0^{\alpha}-\alpha \lambda_0^2+\alpha-2)
\|v\|_{L^{p+1}}^{p+1}. 
\end{equation}

Here, since $P(\phi_{\omega})=0$ and $R(\phi_{\omega})\le 0$, 
it follows from \eqref{RP} that 
\begin{equation}\label{key4}
4(p+1) \|x \phi_{\omega} \|_{L^2}^2
\le \alpha(\alpha-2) \|\phi_{\omega}\|_{L^{p+1}}^{p+1}. 
\end{equation}
Thus, by \eqref{key1.5}, \eqref{key4} and \eqref{key2}, we have 
\begin{align*}
&4(p+1) (\lambda_0^2+\lambda_0^{-2}-2) \|x v\|_{L^2}^2 
=4(p+1) (\lambda_0-\lambda_0^{-1})^2 \|x v\|_{L^2}^2 \\
&\le 4(p+1) (\lambda_0-\lambda_0^{-1})^2 \|x \phi_{\omega} \|_{L^2}^2 \\
&\le \alpha(\alpha-2) (\lambda_0-\lambda_0^{-1})^2 \|\phi_{\omega} \|_{L^{p+1}}^{p+1} 
=\alpha(\alpha-2) (\lambda_0-\lambda_0^{-1})^2 \lambda_0^{\alpha} \|v\|_{L^{p+1}}^{p+1}.
\end{align*}
Therefore, \eqref{key3} holds if 
\begin{equation}\label{key5}
\alpha(\alpha-2) (\lambda_0-\lambda_0^{-1})^2 \lambda_0^{\alpha} 
\le 4 (2 \lambda_0^{\alpha}-\alpha \lambda_0^2+\alpha-2).
\end{equation}

Here, we put $\beta=\alpha/2$ and define 
$$g(s):=s^{\beta}-1-\beta (s-1) -\frac{\beta (\beta-1)}{2}(s-1)^2 s^{\beta-1}$$
for $s>0$. 
Then, \eqref{key5} is equivalent to $g(\lambda_0^2)\ge 0$. 
By the Taylor expansion of $s^{\beta}$ at $s=1$, we have 
$$g(\lambda_0^2)
=\frac{\beta (\beta-1)}{2}(\lambda_0^2-1)^2
\left\{ \xi^{\beta-2}-(\lambda_0^2)^{\beta-1} \right\}$$ 
for some $\xi\in (\lambda_0^2,1)$. 
Since $\beta>1$ and $\lambda_0^2<\xi<1$, we have 
$$(\lambda_0^2)^{\beta-1} 
\le \xi^{\beta-1} \le \xi^{\beta-2},$$ 
and obtain $g(\lambda_0^2)\ge 0$. 
Thus, we have \eqref{key3} and $f(\lambda_0)\le f(1)$. 

Finally, since $P(v)\le 0$, 
it follows from \eqref{key2.5}  that 
\begin{align*}
E(\phi_{\omega}) \le E(v^{\lambda_0})
\le E(v^{\lambda_0})- \lambda_0^2 P(v)
=f(\lambda_0)
\le f(1)=E(v)-P(v).
\end{align*}
This completes the proof. 
\end{proof}

\begin{lemma}\label{lem-invB}
Assume that $R(\phi_{\omega})\le 0$. 
Then, the set $\mathcal{B}_{\omega}$ is invariant under the flow of \eqref{nls}. 
\end{lemma}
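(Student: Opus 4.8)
The plan is to combine the invariance of $\mathcal{A}_{\omega}$ established in Lemma \ref{lem-invA} with a continuity (first-exit-time) argument that prevents $P$ from reaching the value $0$ along the flow, using Lemma \ref{lem-key} to rule out the boundary case.

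First I would take $u_0\in \mathcal{B}_{\omega}$ and let $u(t)$ be the corresponding solution of \eqref{nls} on $[0,T_{\max})$. Since $\mathcal{B}_{\omega}\subset \mathcal{A}_{\omega}$, Lemma \ref{lem-invA} gives $u(t)\in \mathcal{A}_{\omega}$ for all $t\in [0,T_{\max})$; in particular $\|u(t)\|_{L^2}^2=\|\phi_{\omega}\|_{L^2}^2$, $\|u(t)\|_{L^{p+1}}^{p+1}>\|\phi_{\omega}\|_{L^{p+1}}^{p+1}$, and $E(u(t))<E(\phi_{\omega})$ for all such $t$. It then remains only to show $P(u(t))<0$ for all $t\in[0,T_{\max})$. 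Because $u\in C([0,T_{\max}),X)$ and $P$ is continuous on $X$, the map $t\mapsto P(u(t))$ is continuous, and $P(u(0))=P(u_0)<0$. If $P(u(t))\ge 0$ for some $t$, I would set $t_1=\inf\{t\in[0,T_{\max}): P(u(t))\ge 0\}$; then $t_1>0$, $P(u(t))<0$ on $[0,t_1)$, and by continuity $P(u(t_1))=0$.

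Next I would apply Lemma \ref{lem-key} to $v=u(t_1)$. By hypothesis $R(\phi_{\omega})\le 0$, and $v$ satisfies $P(v)=0\le 0$, $\|v\|_{L^2}^2=\|\phi_{\omega}\|_{L^2}^2$, and $\|v\|_{L^{p+1}}^{p+1}>\|\phi_{\omega}\|_{L^{p+1}}^{p+1}$ (the last two because $u(t_1)\in \mathcal{A}_{\omega}$), so \eqref{key1} holds. Lemma \ref{lem-key} then yields $E(\phi_{\omega})\le E(v)-P(v)=E(v)$. But $u(t_1)\in \mathcal{A}_{\omega}$ forces $E(v)=E(u(t_1))<E(\phi_{\omega})$, a contradiction. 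Hence no such $t_1$ exists, $P(u(t))<0$ for all $t\in[0,T_{\max})$, and therefore $u(t)\in \mathcal{B}_{\omega}$ for all $t\in[0,T_{\max})$, which is the assertion.

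I do not expect a serious obstacle here: the argument is a routine continuity/bootstrap argument, and all the analytic content has already been packaged into Lemmas \ref{lem-invA} and \ref{lem-key}. The only point requiring a word of care is the continuity of $t\mapsto P(u(t))$, which is immediate from the local well-posedness in $X$, since each term appearing in $P$ depends continuously on $v\in X$ (for the $\|v\|_{L^{p+1}}^{p+1}$ term one uses the embedding $X\hookrightarrow L^{p+1}(\R^N)$ valid for $1<p<2^*-1$).
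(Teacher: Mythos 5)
Your proof is correct and follows essentially the same route as the paper: reduce to showing $P(u(t))<0$ via Lemma \ref{lem-invA}, locate by continuity a first time where $P(u(\cdot))=0$, and apply Lemma \ref{lem-key} to contradict $E(u(t))<E(\phi_{\omega})$. The only cosmetic difference is that you take the infimum of the exit set where the paper invokes the intermediate value theorem, which changes nothing.
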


\begin{proof}
Let $u_0\in \mathcal{B}_{\omega}$ 
and $u(t)$ be the solution of \eqref{nls} with $u(0)=u_0$. 
Since $\mathcal{A}_{\omega}$ is invariant under the flow of \eqref{nls}, 
we have only to show that 
$P(u(t))<0$ for all $t\in [0,T_{\max})$. 

Suppose that there exists $t_1\in (0,T_{\max})$ such that $P(u(t_1))\ge 0$. 
Then, by the continuity of the function $t\mapsto P(u(t))$, 
there exists $t_0\in (0,t_1]$ such that $P(u(t_0))=0$. 
Moreover, by Lemma \ref{lem-invA}, 
we have 
$\|u(t_0)\|_{L^2}^2=\|\phi_{\omega}\|_{L^2}^2$ and 
$\|u(t_0)\|_{L^{p+1}}^{p+1}>\|\phi_{\omega}\|_{L^{p+1}}^{p+1}$.  
Thus, by Lemma \ref{lem-key}, 
we have 
$$E(\phi_{\omega}) 
\le E(u(t_0))-P(u(t_0))=E(u(t_0)).$$

On the other hand, 
since 
it follows from Lemma \ref{lem-invA} that 
$E(u(t_0))<E(\phi_{\omega})$, 
this is a contradiction. 

Therefore, 
$P(u(t))<0$ for all $t\in [0,T_{\max})$.  
\end{proof}

Now we give the proof of Theorem \ref{thm3}. 

\vspace{2mm} \noindent 
{\bf Proof of Theorem \ref{thm3}}. 
Let $u_0\in \mathcal{B}_{\omega}$ 
and let $u(t)$ be the solution of \eqref{nls} with $u(0)=u_0$. 
Then, by Lemma \ref{lem-invB}, 
$u(t)\in \mathcal{B}_{\omega}$ for all $t\in [0,T_{\max})$. 

Moreover, by the virial identity \eqref{virial}, Lemma \ref{lem-key}
and the conservation of energy \eqref{conservation}, we have 
\begin{align*}
\frac{1}{16}\frac{d^2}{dt^2}\|xu(t)\|_{L^2}^2
=P(u(t))\le E(u(t))-E(\phi_{\omega})
=E(u_0)-E(\phi_{\omega})<0
\end{align*}
for all $t\in [0,T_{\max})$, 
which implies $T_{\max}<\infty$. 
This completes the proof. 
\qed \vspace{2mm}

\vspace{2mm} \noindent 
{\bf Acknowledgments}. 
This work was supported by JSPS KAKENHI Grant Numbers 15K04968 and 26247013. 
The author thanks Takahiro Yamaguchi for valuable discussions.

\end{document}